\newtheorem{proposition}{Proposition}
\newtheorem{definition}{Definition}
\newtheorem{remark}{Remark}
\newtheorem{lemma}{Lemma}
\newtheorem{assumption}{Assumption}
\newcommand{\A}{\mathcal{A}}
\newcommand{\M}{\mathcal{M}}
\newcommand{\nom}{\mathrm{nom}}
\newcommand{\U}{\mathcal{U}}
\newcommand{\dee}{\mathrm{d}}
\newcommand{\OCPT}{\mathrm{OCP_T}}
\DeclareMathOperator*{\arginf}{arg\,inf}
\DeclareMathOperator*{\argmin}{arg\,min}
\title{\LARGE \bf
Model Predictive Control Tailored to Epidemic Models
}
\author{%
	Philipp Sauerteig%
	\footnote{Corresponding author.}\,
	\thanks{Technische Universit{\"{a}}t Ilmenau, Ilmenau, Germany, Institute for Mathematics %
		(\texttt{[philipp.sauerteig,mitsuru.wilson,karl.worthmann]@tu-ilmenau.de}).}
	\and
	Willem Esterhuizen%
	\thanks{Technische Universit\"at Chemnitz, Germany, Automatic Control and System Dynamics Laboratory %
		(\texttt{[willem.esterhuizen,stefan.streif]@etit.tu-chemnitz.de}).}
	\and
	Mitsuru Wilson\footnotemark[2]
	\and
	Tobias K. S. Ritschel%
	\thanks{Max Planck Institute for Dynamics of Complex Technical Systems, Magdeburg, Germany
		(\texttt{tobk@dtu.dk}).}
	\and
	Karl Worthmann\footnotemark[2]
	\and
	Stefan Streif\footnotemark[3]
}
\date{}
\begin{document}

\maketitle

\begin{abstract}
	We propose a model predictive control (MPC) approach for minimising the social distancing and quarantine measures during a pandemic while maintaining a hard infection cap.\ %
	To this end, we study the admissible and the maximal robust positively invariant set (MRPI) of the standard SEIR compartmental model with control inputs.\ %
	Exploiting the fact that in the MRPI all restrictions can be lifted without violating the infection cap, we choose a suitable subset of the MRPI to define terminal constraints in our MPC routine and show that the number of infected people decays exponentially within this set.\ %
	Furthermore, under mild assumptions we prove existence of a uniform bound on the time required to reach this terminal region (without violating the infection cap) starting in the admissible set.\ %
	The findings are substantiated based on a numerical case study.\ %
\end{abstract}

\section{Introduction}
As the ongoing COVID-19 pandemic demonstrates, it is important to understand how infectious diseases spread and how countermeasures may affect this spread.\ %
Evaluating the impact of countermeasures based on mathematical models is an active field of research.\ %
In particular, there is a large body of work applying optimal control to epidemiology.\ %
Early work on this topic includes~\cite{hethcote1973optimal}, where vaccination policies are derived for an SIR (susceptible-infective-removed) model via dynamic programming; and~\cite{sanders1971quantitative} which solves optimal control problems for an SIS (susceptible-infective-susceptible) model.\ %
Many papers apply optimal control to SIR models, \cite{HanD11, godara2021control, kirschner1997optimal, agusto2017optimal}.\ %
See also the survey~\cite{sharomi2017optimal}.\ %

Model predictive control (MPC) has also been applied to epidemiology, such as in the papers \cite{selley2015dynamic,kohler2018dynamic,watkins2019robust} which consider stochastic networked epidemic models.\ %
Recently, many papers have appeared on the modelling and control of COVID-19 via MPC, see for example \cite{bonnans2020optimal,kohler2020robust}.\ %
The papers \cite{ARONNA2021100437,perkins2020optimal,grundel2020much,MorBCetal20} consider MPC of COVID-19 models where the control is limited to non-pharmaceutical interventions, whereas the papers \cite{GrunHeyd21b,parino2021model} also consider vaccination strategies.\ %

\emph{Set-based methods} have also been applied to epidemiology with the specific goal of control design that not only eliminates a disease, but also respects \emph{hard infection caps}.\ %
In~\cite{de2016viable} the authors describe the \emph{admissible set} (also known as the \emph{viability kernel} in viability theory~\cite{aubin2011viability}) of a two-dimensional model of a vector-borne disease, such as dengue fever or malaria.\ %
The paper~\cite{esterhuizen2020maintaining} extended this study, showing that the malaria model's admissible set as well as the \emph{maximal robust positively invariant set} (MRPI) may be described via the \emph{theory of barriers}.\ %
In \cite{DonL13, EstAS20} it is shown that parts of the boundaries of these sets are made up of special integral curves of the system that satisfy a minimum-/maximum-like principle.\ %
The recent paper~\cite{ester_epidemic_management_2021} uses the theory of barriers to describe both sets for the well-known SIR and SEIR epidemic models with and without plant-model mismatch.\ %

The papers \cite{de2016viable,esterhuizen2020maintaining,ester_epidemic_management_2021} argue that to maintain an infection cap, the control should be chosen based on the location of the state with respect to these sets.\ %
If it is located in the MRPI, intervention measures may be relaxed and economic damage may be minimized.\ %
However, if the state is located in the admissible set then intervention measures need to be carefully considered to avoid a breach in the infection cap.\ %
Some other research concerned with applying set-based methods to epidemiology include work on the \emph{set of sustainable thresholds}, \cite{barrios2018sustainable}, and the paper~\cite{rashkov2021model}, which describes the viability kernel of an SIR model through the solution of an associated Hamilton-Jacobi equation.\ %

In this paper, we combine the results from the paper~\cite{ester_epidemic_management_2021} with model predictive control (MPC)~\cite{grune2017nonlinear}.\ %
We consider the SEIR model and impose a hard cap on the proportion of infective individuals %
as well as constraints on the contact rate and removal rate.\ %
We first show that \emph{it is possible} to eliminate the disease from any initial state located in the admissible set while always satisfying the infection cap, and that this \emph{is always the case} if the initial value is contained in the system's MRPI.\ %
Based on our findings, we then construct terminal conditions ensuring that the terminal costs used in our MPC implementation are uniformly bounded and prove initial feasibility under mild assumptions.\ %

In Section~\ref{Sec_SEIR_model_and_sets}, we introduce the constrained epidemic model and present important facts regarding the system's admissible set and the MRPI.\ %
Section~\ref{sec_vanilla_MPC} presents the MPC implementation that takes advantage of these sets.\ %
In this section we also present the paper's main result (Proposition~\ref{prop_mpc_works}) ensuring initial feasibility and eradication of the disease if the prediction horizon is sufficiently long.\ %
A numerical case study illustrates our findings in Section~\ref{sec_numerics}.\ %

For an interval $\mathcal{I} \subseteq \mathbb{R}$ and a set $Y \subseteq \mathbb{R}^m$ we denote the set of all locally integrable functions from~$\mathcal{I}$ to~$Y$ by
\begin{align}
	\mathcal{L}_\mathrm{loc}^1(\mathcal{I}, Y) := \Set{\begin{matrix} f : \mathcal{I} \to Y \\ \text{measurable} \end{matrix} | 
		\begin{matrix}
			\forall \, K \subset \mathcal{I}, \text{ compact}: \\
			\int_K |f_i(x)| \, \mathrm{d} x < \infty \\
			\forall \, i \in \{1, \ldots, m\} 
		\end{matrix} 
	}. \notag 
\end{align}

\section{The SEIR model}\label{Sec_SEIR_model_and_sets}
In this section, we introduce the SEIR model and the system's admissible set as well as the MRPI.\ 

\subsection{System dynamics and constraints}
We consider the SEIR model~\cite{Het2000}
\begin{subequations}\label{SEIR_IVP}
	\begin{align}
		\dot{S}(t) 	& = - \beta(t) S(t) I(t), \quad S(0) = S_0 \label{SEIR_eq_1}\\
		\dot{E}(t) 	& = \beta(t) S(t) I(t) - \eta E(t), \quad E(0) = E_0 \label{SEIR_eq_2}\\
		\dot{I}(t) 	& = \eta E(t) - \gamma(t) I(t), \quad I(0) = I_0 \label{SEIR_eq_3}\\
		\dot{R}(t)	& = \gamma(t) I(t), \quad R(0) = R_0, 
	\end{align}
\end{subequations}
where $S(t)$, $E(t)$, $I(t)$, and $R(t)$ describe the fractions of people who are either \emph{susceptible}, \emph{exposed}, \emph{infectious}, or \emph{removed} at time $t \geq 0$ and $(S_0, E_0, I_0, R_0) \in [0,1]^4$ with $S_0 + E_0 + I_0 + R_0 = 1$ denotes the initial value.\ %
In this context, compartment~$R$ accounts for both recovery and death due to a fatality caused by the disease.\ %
The parameter~$\eta^{-1} > 0$ denotes the average incubation time in days.\ %
Furthermore, in the standard formulation of the SEIR model, $\beta \equiv \beta_\mathrm{nom} > 0$ is the rate of infectious contacts and $\gamma \equiv \gamma_\mathrm{nom} > 0$ is the removal rate.\ %
However, we take countermeasures into account by allowing~$\beta$ and~$\gamma$ to be time-varying control variables.\ %
In particular, a value $\beta(t) < \beta_\mathrm{nom}$ reflects a reduction of the rate of infectious contacts, e.g.\ via contact restrictions or hygiene measures, while $\gamma(t) > \gamma_\mathrm{nom}$ can be interpreted as quarantining, and, thus, removing infectious people.\ %
These considerations motivate our control constraints
\begin{align}
	\beta(t) & \in [\beta_{\min}, \beta_{\nom}], \; \gamma(t) \in [\gamma_{\nom}, \gamma_{\max}] \quad \forall \, t \geq 0 \label{SEIR_eq_4}
\end{align}
with $\beta_\mathrm{min} > 0$ and $\gamma_\mathrm{max} < \infty$.\ %
Moreover, we aim to maintain a hard infection cap by satisfying the state constraint
\begin{align}
	I(t) \; \leq \; I_{\max} \label{SEIR_eq_5}
\end{align}
for some $I_\mathrm{max} \in (0,1)$.\ %

Note that~$R$ does not affect the other compartments.\ %
Thus, we may ignore it and analyse the three-dimensional system \eqref{SEIR_eq_1}--\eqref{SEIR_eq_3} under input and state constraints \eqref{SEIR_eq_4} and \eqref{SEIR_eq_5}.\ %
To ease our notation, we use $x(t) = (x_1(t), x_2(t), x_3(t))^\top := (S(t), E(t), I(t))^\top \in \mathbb{R}^3$ to denote the state and $u(t) := (\beta(t), \gamma(t))^\top \in \mathbb{R}^2$ to denote the control input at time $t \geq 0$ and write $\dot x(t) = f(x(t),u(t))$ short for~\eqref{SEIR_eq_1}--\eqref{SEIR_eq_3}.\ %
Furthermore, we denote the set of feasible control values by $U := [\beta_{\min},\beta_{\nom}] \times [\gamma_{\nom}, \gamma_{\max}] \subset \mathbb{R}^2$ and the set of feasible controls as $\mathcal{U} = \mathcal{L}_\mathrm{loc}^1([0, \infty), U)$.\ %
Thus, for any initial value~$x_0$ and control input $u \in \mathcal{U}$ there exits a unique solution of the initial value problem~\eqref{SEIR_IVP}.\ %
We then write $x(\cdot; x_0, u)$ to denote the trajectory with respect to the three compartments~$S$, $E$, and~$I$ and highlight the dependence on~$x_0$ and~$u$.\ %
Based on $g : \mathbb{R}^3 \to \mathbb{R}$, $x \mapsto x_3 - I_\mathrm{max}$, the set of feasible states is given by $G := \Set{x \in \mathbb{R}^3 | g(x) \leq 0}$ and its interior by~$G_-$.\ %
Furthermore, the state trajectory satisfies $S(t) + E(t) + I(t) \leq 1$ for all $t \geq 0$.\ %
In other words, the system is confined to the positively invariant set
\[
\Pi := \Set{(S, E, I) \in [0,1]^3 | S + E + I \leq 1}.
\]

Given an initial state, $x_0 \in G_{\Pi} := G \cap \Pi$, we aim to determine a control~$u = u(x_0)$ that eliminates the disease, i.e., $\lim_{t \to \infty} (E(t) + I(t)) = 0$, while maintaining the hard infection cap, i.e., $I(t) \leq I_{\max}$ for all $t \in [0, \infty)$.\ %
To this end, we make use of the admissible set and the MRPI.\ %

\begin{remark}
	For $\gamma > \gamma_\mathrm{nom}$, i.e., if quarantine measures are active, a certain (additional) proportion of the infectious people are moved to compartment~$R$, see~\eqref{SEIR_eq_3}.\ %
	Consequently, constraint~\eqref{SEIR_eq_5} does not reflect a bound on the actual number of infectious people, but rather on those, who are able to infect others.\ %
	However, it suffices to illustrate our approach.\ %
	For a more elaborate way to enforce an infection cap, we refer to~\cite{grundel2020much,GrunHeyd21b}.\ %
\end{remark}

\subsection{Admissible and maximal robust positively invariant set}
The admissible set contains all states, for which there exists an input such that the state constraints are satisfied for all future time.\ %
\begin{definition}
	The \emph{admissible set} for the system~\eqref{SEIR_IVP}--\eqref{SEIR_eq_5} is given by
	\begin{align*}
		\mathcal{A} := \Set{ x_0 \in G_{\Pi} | \exists u\in\mathcal{U}:  x(t;x_0,u) \in G_{\Pi} \, \forall \, t \in [0,\infty)}.
	\end{align*}
\end{definition}
Moreover, we consider the set of states for which any control ensures feasibility.\ %
\begin{definition}
	The \emph{maximal robust positively invariant set} (MRPI) contained in $G_{\Pi}$ of the system \eqref{SEIR_IVP}--\eqref{SEIR_eq_5} is given by
	\begin{align*}
		\mathcal{M} := \Set{ x_0 \in G_{\Pi} | x(t; x_0, u) \in G_{\Pi} \, \forall \, t \in [t_0, \infty) \, \forall u \in \mathcal{U}}.
	\end{align*}
\end{definition}

In~\cite{ester_epidemic_management_2021}, both sets have been characterized using the \emph{theory of barriers}~\cite{DonL13, EstAS20}.\ %
In particular, the sets for system~\eqref{SEIR_IVP} with constraints~\eqref{SEIR_eq_4}--\eqref{SEIR_eq_5} are compact and never empty with $\M \subseteq \A$.\ %

In order to eliminate the disease, we need to drive the state to the set of equilibria given by
\[
\mathcal{E} := \Set{(S, E, I) \in [0,1]^3 | S \in [0,1], E = 0, I = 0}.
\]
Clearly, $\mathcal{E} \subset \mathcal{M}$.\ %

The next lemma summarizes some facts about the SEIR model in terms of~$\mathcal{A}$ and~$\mathcal{M}$ that will be useful in the sequel.\ %
\begin{lemma}\label{lemma_on_sets}
	The following assertions hold true.
	\begin{enumerate}
		\item For every $x_0 \in \Pi$ and any $u \in \mathcal{U}$ the epidemic will die out asymptotically, i.e., the compartments satisfy $\lim_{t \to \infty} E(t) = \lim_{t \to \infty} I(t) = 0$ as well as $\lim_{t \to \infty} (S(t) + R(t)) = 1$. 
		\item For every $x_0\in\A$ there exists a $u\in\U$ such that $\lim_{t \to \infty} E(t) = \lim_{t \to \infty} I(t) = 0$ and $I(t)\leq I_{\max}$ for all $t\geq 0$.
		\item For every $x_0\in\M$, $\lim_{t \to \infty} E(t) = \lim_{t \to \infty} I(t) = 0$ and $I(t)\leq I_{\max}$ for all $t\geq 0$, for any $u\in \U$.
	\end{enumerate}
\end{lemma}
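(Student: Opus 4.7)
The plan is to treat item~1 as the core content; items~2 and~3 will then follow immediately by combining item~1 with the respective definitions of~$\A$ and~$\M$. My strategy for item~1 is a monotonicity-plus-Barbalat argument that exploits the sign structure of the SEIR dynamics and the lower bound $\gamma(t)\geq\gamma_{\nom}>0$ guaranteed by~\eqref{SEIR_eq_4}.

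First I would extract monotonicity. Along any trajectory starting in $\Pi$, $S$ is non-increasing (since $\dot S=-\beta SI\leq 0$) and non-negative, while $R$ is non-decreasing (since $\dot R=\gamma I\geq 0$) and bounded above by~$1$; hence $S_\infty:=\lim_{t\to\infty}S(t)$ and $R_\infty:=\lim_{t\to\infty}R(t)$ exist. Integrating $\dot S$ and $\dot R$ yields
\begin{align*}
  \int_0^\infty \beta(t)S(t)I(t)\,\dee t \;=\; S_0-S_\infty, \qquad \int_0^\infty \gamma(t)I(t)\,\dee t \;=\; R_\infty-R_0,
\end{align*}
both finite, and the constraint $\gamma(t)\geq\gamma_{\nom}>0$ promotes the second integral to $\int_0^\infty I(t)\,\dee t<\infty$.

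Next I would invoke Barbalat's lemma twice. Since trajectories stay in the compact set $\Pi$ and controls in~$U$, $|\dot I|\leq \eta+\gamma_{\max}$, so $I$ is Lipschitz, hence uniformly continuous; together with $I\in L^1([0,\infty))$ this yields $I(t)\to 0$. For~$E$, integrating $\dot E=\beta SI-\eta E$ gives
\begin{align*}
  \eta\int_0^t E(s)\,\dee s \;=\; \int_0^t \beta(s)S(s)I(s)\,\dee s \;-\; \bigl(E(t)-E_0\bigr),
\end{align*}
and boundedness of $E\leq 1$ together with the finiteness of the first integral shows $\int_0^\infty E(s)\,\dee s<\infty$; uniform continuity of~$E$ (from the bound $|\dot E|\leq \beta_{\nom}+\eta$) and Barbalat again give $E(t)\to 0$. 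Conservation $S+E+I+R\equiv 1$ then forces $S(t)+R(t)\to 1$, completing item~1. Items~2 and~3 are now immediate, since in both cases the trajectory lies in $\Pi$ so item~1 applies; the infection cap $I(t)\leq I_{\max}$ is built into membership in~$\A$ (for the existentially chosen~$u$) and in~$\M$ (for every $u\in\U$), respectively.

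I expect the only subtle point to be the application of Barbalat's lemma under merely measurable controls: uniform continuity of $E$ and $I$ must still be verified, but this follows from the essential boundedness of $|\dot E|$ and $|\dot I|$ on the compact state and control domains, after which the argument is routine.
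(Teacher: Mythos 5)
Your proof is correct, and for item~1 it takes a genuinely different technical route from the paper's. The paper follows Hethcote's classical argument: it notes that $S$ and $R$ are monotone and bounded, asserts $\lim_{t\to\infty}\dot R(t)=0$, sandwiches $I$ between $\tfrac{1}{\gamma_{\max}}\dot R$ and $\tfrac{1}{\gamma_{\min}}\dot R$ to get $I(t)\to 0$, and then rules out $E_\infty>0$ by a contradiction argument (eventually $\dot I=\eta E-\gamma I>0$, incompatible with $I\to 0$). You instead integrate the $\dot S$ and $\dot R$ equations to obtain $I\in L^1$ and $E\in L^1$, verify uniform continuity from the essential bounds on $\dot I$ and $\dot E$, and apply Barbalat's lemma twice. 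The two approaches share the same skeleton (monotonicity of $S$ and $R$, then $I\to 0$, then $E\to 0$, then conservation), but your version is the more careful one: the paper's step ``$R$ monotone and bounded $\Rightarrow\lim\dot R=0$'' is not automatic for an absolutely continuous function whose derivative involves a merely measurable control, whereas your $L^1$-plus-uniform-continuity argument supplies exactly the missing justification. What the paper's route buys in exchange is brevity and a direct quantitative link to the cited SIR result. Your treatment of items~2 and~3 coincides with the paper's: membership in $\A$ (resp.\ $\M$) supplies the constraint satisfaction for some (resp.\ every) $u\in\U$, and item~1 supplies the limits since $\A,\M\subseteq\Pi$.
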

\begin{proof}
	1) The idea is based on the proof of Theorem~5.1 in~\cite{Heth89}, where the assertion has been shown for the SIR model.\ %
	Let $x_0 \in \Pi$ and $u \in \mathcal{U}$ be arbitrary.\ %
	Since~$S$ and~$R$ are monotonic and bounded, the limits $S_\infty := \lim_{t \to \infty} S(t)$ and $R_\infty := \lim_{t \to \infty} R(t)$ exist with $\lim_{t \to \infty} \dot S(t) = \lim_{t \to \infty} \dot R(t) = 0$.\ %
	Hence, 
	\begin{align}
		0 \; = \; \lim_{t \to \infty} \tfrac{1}{\gamma_\mathrm{max}} \dot R(t) \; \leq \; \lim_{t \to \infty} I(t) \; \leq \; \lim_{t \to \infty} \tfrac{1}{\gamma_\mathrm{min}} \dot R(t) \; = \; 0 \notag
	\end{align}
	and, therefore, $I_\infty := \lim_{t \to \infty} I(t) = 0$.\ %
	Furthermore, $E_\infty := \lim_{t \to \infty} E(t) = 1 - (S_\infty + I_\infty + R_\infty) \in [0,1]$ exists.\ %
	Assume $E_\infty > 0$.\ %
	Then, there exists some $t_0 \geq 0$ such that $\dot I(t) = \eta E(t) - \gamma(t) I(t) > 0$ for all $t \geq t_0$, in contradiction to $I_\infty = 0$.\\ %
	2) Since $x_0 \in \mathcal{A}$, there exists some feasible control $u \in \mathcal{U}$ such that $x(t; x_0; u) \in G_\Pi$ for all $t \geq 0$.\ %
	Also, noting that $\A\subseteq \Pi$, the assertion follows immediately from~1).\\ %
	3)  Since $x_0 \in \mathcal{M}$, $x(t; x_0; u) \in G_\Pi$ for all $t \geq 0$ for any $u \in \mathcal{U}$.\ %
	Also, noting that $\M \subseteq \Pi$, the assertion follows immediately from~1).
\end{proof}

In Section~\ref{sec_vanilla_MPC}, the set
\begin{align}
	\mathbb{X}_f := \Set{x \in \Pi | S \leq \tfrac{\gamma_\mathrm{nom}}{\beta_\mathrm{nom}}, \, I \leq I_\mathrm{max}, E \leq \tfrac{\gamma_\mathrm{nom}}{\eta} I_\mathrm{max}} \cup \mathcal{E} \notag 
\end{align}
will be used to define terminal constraints within MPC.\ %
We state the following lemma to motivate our choice.\ %
\begin{lemma}\label{lem:exponential_decay_if_S_suff_small}
	The set~$\mathbb{X}_f$ is contained in~$\mathcal{M}$.\ %
	Moreover, the compartments~$E$ and~$I$ decay exponentially for all $x_0 \in \mathbb{X}_f$ and all $u \in \mathcal{U}$.\ %
\end{lemma}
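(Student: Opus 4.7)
The plan is to derive both assertions from positive invariance of $\mathbb{X}_f$ under every admissible control $u \in \mathcal{U}$. This invariance directly implies $\mathbb{X}_f \subseteq \mathcal{M}$ by definition of the MRPI, and along every trajectory it forces the inequality $\beta(t) S(t) \leq \gamma_{\nom} \leq \gamma(t)$, which will be the key to exponential convergence of $E$ and $I$.

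The equilibrium subset $\mathcal{E}$ is pointwise fixed under any control, so for invariance it suffices to treat the polytopic piece. I would apply a Nagumo-type argument, checking that the vector field points inward (or is tangent) on each bounding hyperplane for every $u \in U$. The nontrivial computations are: on $\{S = \gamma_{\nom}/\beta_{\nom}\}$, $\dot S = -\beta S I \leq 0$; on $\{I = I_{\max}\}$, $\dot I = \eta E - \gamma I \leq \gamma_{\nom} I_{\max} - \gamma_{\nom} I_{\max} = 0$, using $E \leq (\gamma_{\nom}/\eta) I_{\max}$ and $\gamma \geq \gamma_{\nom}$; and on $\{E = (\gamma_{\nom}/\eta) I_{\max}\}$, $\dot E = \beta S I - \eta E \leq \beta_{\nom} (\gamma_{\nom}/\beta_{\nom}) I_{\max} - \gamma_{\nom} I_{\max} = 0$, using $\beta \leq \beta_{\nom}$, $S \leq \gamma_{\nom}/\beta_{\nom}$, $I \leq I_{\max}$. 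The remaining boundary faces (non-negativity of the compartments and $S + E + I \leq 1$) follow from standard SEIR positivity and the already-noted invariance of $\Pi$.

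Once invariance is in hand, the inequalities $\beta S \leq \gamma_{\nom}$ and $\gamma \geq \gamma_{\nom}$ produce the cooperative componentwise differential inequality $\dot E \leq \gamma_{\nom} I - \eta E$, $\dot I \leq \eta E - \gamma_{\nom} I$. The associated time-invariant comparison matrix has eigenvalues $0$ and $-(\eta + \gamma_{\nom})$, so on its own it yields only boundedness $E(t) + I(t) \leq E_0 + I_0$. To upgrade this to exponential decay I would use that $S$ decreases strictly unless the trajectory lies in $\mathcal{E}$: if $x_0 \notin \mathcal{E}$ and $I_0 > 0$, then $\dot S(0) < 0$ immediately, while if $I_0 = 0$ then $\dot I(0) = \eta E_0 > 0$ forces $I(t) > 0$ and hence $\dot S(t) < 0$ for small $t > 0$. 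Combined with $E(t), I(t) \to 0$ from Lemma~\ref{lemma_on_sets}, $S$ converges monotonically to some $S_\infty$ with $S_\infty < S_0 \leq \gamma_{\nom}/\beta_{\nom}$. One can then pick $T \geq 0$ and $\kappa > 0$ with $\beta(t) S(t) \leq \gamma_{\nom} - \kappa$ for all $t \geq T$; the shifted comparison matrix has characteristic polynomial $\lambda^2 + (\eta + \gamma_{\nom}) \lambda + \eta \kappa$ whose roots are both strictly negative, yielding exponential decay on $[T, \infty)$, which together with boundedness on $[0, T]$ gives $E(t), I(t) \leq C e^{-\lambda t}$ for all $t \geq 0$.

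The hard part is this last step: at the corner $\beta = \beta_{\nom}$, $S = \gamma_{\nom}/\beta_{\nom}$, $\gamma = \gamma_{\nom}$ the instantaneous linearization of $(E,I)$ has a zero eigenvalue (the effective reproduction number equals one), so no Lyapunov function can deliver $\dot V \leq -\lambda V$ uniformly over $\mathbb{X}_f \times \mathcal{U}$. The strict monotonicity of $S$ off $\mathcal{E}$ is exactly what breaks this marginal regime asymptotically, at the cost of the constants $C$ and $\lambda$ depending on the particular trajectory.
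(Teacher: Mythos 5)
Your proof is correct. For the exponential-decay half it is essentially the paper's argument: both proofs bound $(E,I)$ from above by a cooperative linear comparison system and observe that the comparison matrix becomes Hurwitz as soon as $\beta(t) S(t)$ drops strictly below $\gamma_{\nom}$, which the strict decrease of $S$ off $\mathcal{E}$ guarantees after an arbitrarily short time; the paper freezes $S$ at $S(\delta) < S_0 \leq \gamma/\beta$ and computes the eigenvalues of $\bigl[\begin{smallmatrix} -\eta & \beta S(\delta) \\ \eta & -\gamma \end{smallmatrix}\bigr]$ explicitly, while your shifted characteristic polynomial $\lambda^2 + (\eta+\gamma_{\nom})\lambda + \eta\kappa$ is the same computation in different clothing, and both arguments yield trajectory-dependent constants, as your closing remark correctly notes. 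Where you genuinely differ is the first claim, $\mathbb{X}_f \subseteq \mathcal{M}$: you establish robust positive invariance of $\mathbb{X}_f$ itself by a Nagumo-type check on each face and then invoke maximality of $\mathcal{M}$, whereas the paper only verifies $\dot I(0) \leq 0$ on the face $I_0 = I_{\max}$, considers $u = u_{\nom}$, and leaves implicit both the persistence of the bound $E \leq (\gamma_{\nom}/\eta) I_{\max}$ (on which that sign condition relies) and the conclusion that $I(t) \leq I_{\max}$ holds for all $t$ and all $u \in \mathcal{U}$. Your face-by-face computation, in particular $\dot E \leq \beta_{\nom}(\gamma_{\nom}/\beta_{\nom}) I_{\max} - \gamma_{\nom} I_{\max} = 0$ on $\{E = (\gamma_{\nom}/\eta) I_{\max}\}$, supplies exactly this missing piece and buys a self-contained, fully robust proof of the set-inclusion statement; the only minor caveat is that Nagumo's condition should be checked on the full tangent cone at the edges and corners of the polytope (or handled by an approximation argument), which is routine here since the field points inward on every adjacent face.
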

\begin{proof}
	Clearly, any point~$x_0$ with $S_0 \in [0,1]$ and $E_0 = I_0 = 0$ is in~$\mathcal{M}$.\ %
	Let $x_0 \in \mathbb{X}_f$ with $E_0 + I_0 > 0$ be given and consider $u = u_\mathrm{nom}$.\ %
	First, note that if $I_0 = I_\mathrm{max}$, then 
	\begin{align}
		\dot I(0) \; = \; \eta E(0) - \gamma_\mathrm{nom} I_\mathrm{max} \; \leq \; 0 \notag
	\end{align}
	by definition of~$\mathbb{X}_f$.\ %
	Moreover, if $I_0 = 0$, then $\dot I(0) > 0$ and $\dot E(0) < 0$.\ %
	Due to continuity, there exists some (sufficiently small) time $\delta > 0$ such that $I(\delta) > 0$ and $x(\delta) \in \mathbb{X}_f$.\ %
	Since~$S$ decreases monotonically, the compartments~$E$ and~$I$ are bounded from above by the solution of the initial value problem
	\begin{subequations}\label{subeq:linear_ode_from_above}
		\begin{align}
			\dot e(t) \; & = \; \beta S(\delta) i(t) - \eta e(t), \quad e(0) = E(\delta), \\
			\dot i(t) \; & = \; \eta e(t) - \gamma i(t), \quad i(0) = I(\delta). 
		\end{align}
	\end{subequations}
	In particular, 
	\begin{align*}
		\dot E(\delta + t) \; & \leq \; \dot e(t) \; \leq \; \beta_\mathrm{nom} S(\delta) i(t) - \eta e(t) \\
		\dot I(\delta +t ) \; & \leq \; \dot i(t) \; \leq \; \eta e(t) - \gamma_\mathrm{nom} i(t)
	\end{align*}
	holds.\ %
	The solution of~\eqref{subeq:linear_ode_from_above} with $u = u_\mathrm{nom}$ is given by
	\begin{align}
		(e(t), i(t))^\top \; = \; \mathrm{e}^{A t} (E(\delta), I(\delta))^\top \notag
	\end{align}
	with matrix
	\begin{align}
		A \; = \; 
		\begin{bmatrix}
			- \eta & \beta S(\delta) \\
			\eta & - \gamma
		\end{bmatrix}.\notag
	\end{align}
	The eigenvalues of~$A$ are 
	\begin{align}
		\lambda_{1,2} \; = \; - \frac {\eta + \gamma} {2} \pm \sqrt{ \frac{(\eta + \gamma)^2}{4} - \eta \gamma + \eta \beta S(\delta) }. \notag
	\end{align}
	Hence, $S(\delta) < S_0 \leq \gamma / \beta$ yields exponential stability of~$A$.\ %
	Consequently, $e$ and~$i$ and, thus, $E$ and~$I$ decay exponentially.\ %
\end{proof}

\begin{remark}\label{rem:threshold_S}
	As in the SIR model~\cite{Heth89}, $S(t) = \gamma(t) / \beta(t)$ is a threshold in the following sense
	\begin{align}
		\frac{\mathrm{d}}{\mathrm{d}t} (E(t) + I(t)) %
		& \begin{cases}
			> 0 \quad \text{if } S(t) > \gamma(t) / \beta(t), \\
			= 0 \quad \text{if } S(t) = \gamma(t) / \beta(t), \\
			< 0 \quad  \text{if } S(t) < \gamma(t) / \beta(t),
		\end{cases} \notag
	\end{align}
	i.e., as long as~$S$ is sufficiently big, the total number of \emph{infected} (exposed or infectious) people increases monotonically and decays otherwise.\ %
	In the following, we use the notation $\bar S = \gamma_\mathrm{nom} / \beta_\mathrm{nom}$ to denote the threshold for herd immunity, which is also the inverse of the basic reproduction number~\cite{Het2000}.\ %
\end{remark}

\section{MPC with terminal cost and set}\label{sec_vanilla_MPC}
We propose an MPC scheme for determining a feedback control that aims to minimise the required contact restrictions and quarantine measures while maintaining a hard infection cap, i.e., given an initial value $x_0 \in G_\Pi$ we are interested in solving the optimal control problem
\begin{subequations}\label{OCP_infinite_horizon}
	\begin{align}
		\inf_{u \in \mathcal{U}} \quad & J_{\infty}(x_0, u) := \int_0^\infty \ell(x(t; x_0, u), u(t))\,\, \mathrm{d}t \\
		\mathrm{s.t.} \quad & \dot x(t) = f(x(t), u(t)), \quad x(0) = x_0 \\
		& g(x(t)) \leq 0 \quad \forall \, t \geq 0
	\end{align}
	with stage costs $\ell : \mathbb{R}^3 \times \mathbb{R}^2 \to \mathbb{R}$, 
	\begin{equation}
		\ell(x,u) := E^2 + I^2 + (\beta - \beta_{\nom})^2 + (\gamma - \gamma_{\nom})^2. \label{quadtratic_stage_cost}
	\end{equation}
\end{subequations}
For an introduction to MPC we refer to~\cite{grune2017nonlinear}.\ %

\subsection{MPC formulation}
We present a solution to the stated problem using continuous-time MPC.\ %
Thus, we consider the following finite-horizon optimal control problem,
\[
\OCPT : \quad \inf_{u \in \U_T^{\mathbb{X}_f}(x_0)} J_T(x_0,u),
\]
with $J_T(x_0, u) : \mathbb{R}^3 \times \mathcal{L}^1_\mathrm{loc}([0,T), \mathbb{R}^2) \to \mathbb{R}_{\geq 0}$, the cost functional, specified as
\[
J_T(x_0,u) := \int_0^T \ell(x(t; x_0, u), u(t))\,\dee t + J_{f}(x(T;x_0,u)),
\]
with terminal cost 
\begin{align}
	J_{f}(\hat{x}) & := \int_{0}^{\infty} x^2_2(t; \hat{x}, u_{\nom})  + x^2_3(t; \hat{x}, u_{\nom})\,\,\dee t \notag \\
	& = \int_{0}^{\infty} E_{\nom}^2(t; \hat x) + I_{\nom}^2(t; \hat x)\,\,\dee t. \notag
\end{align} 
Here, $E_{\nom}(\, \cdot \, ; \hat x)$ and~$I_{\nom}(\, \cdot \, ; \hat x)$ denote the exposed and infective compartments of the state trajectory obtained with the nominal input $u_\mathrm{nom} \equiv (\beta_{\nom}, \gamma_{\nom})^\top$ starting at~$\hat x$.\ %

Given an initial state $x_0 \in G_{\Pi}$ and a horizon length $T \in \mathbb{R}_{>0} \cup \{\infty\}$, the set $\U_T^{\mathbb{X}_f}(x_0)$ denotes all control functions $u \in \U$ for which $x(t;x_0,u) \in G_{\Pi}$ for all $t \in [0,T]$, and $x(T;x_0,u)\in \mathbb{X}_f$.\ %
The optimal value function is defined as $V_T(x_0) : \mathbb{R}^3 \to \mathbb{R}_{\geq 0} \cup \{+\infty\}$, 
\[
V_T(x_0):= \inf_{u\in\U_T^{\mathbb{X}_f}(x_0)} J_T(x_0,u).
\]
If $\U_T^{\mathbb{X}_f}(x_0) = \emptyset$, i.e., there does not exist a solution to $\OCPT$, we take the convention that $V_T(x_0) = \infty$.\ %
The continuous-time MPC algorithm is outlined in Algorithm~\ref{alg:mpc}.\ %

We assume that if $\mathcal{U}_T^{\mathbb{X}_f}(x_0)\neq \emptyset$ in step 1), then a minimizer is an element of $\U_T^{\mathbb{X}_f}(x_0)$, see \cite[p. 56]{grune2017nonlinear} for a discussion on this issue.\ %
Thus, given an initial state $x_0 \in \A$, $\OCPT$ is solved with the finite horizon $T = N \delta$, the first portion of the minimizer is applied to the system, the state is measured, and the process is iterated indefinitely.\ %
The algorithm implicitly produces the time-varying sampled-data MPC \emph{feedback}, $\mu_{T,\delta} : [0,\delta) \times G_{\Pi} \to U$, which results in the MPC \emph{closed-loop} solution, denoted $x_{\mu_{T,\delta}}(\cdot; x_0)$.\ %

\begin{algorithm}[h]
	\caption{Continuous-time MPC with target set}
	\noindent \textbf{Input}: $\delta \in \mathbb{R}_{>0}$, $N \in \mathbb{N}$, $x_0 \in \mathcal{A}$, $\mathbb{X}_f\subset\M$\\
	\textbf{Set} prediction horizon $T \leftarrow N \delta$. %
	\begin{enumerate}
		\item Find a minimizer $u^\star \in \arginf_{u \in \mathcal{U}_T^{\mathbb{X}_f}(x_0)} J_T(x_0, u)$.
		\item Implement $u^\star(\cdot)$, for $t \in [0, \delta)$. %
		\item Set $x_0 \leftarrow x(\delta; x_0, u^{\star})$ and go to step~1).
	\end{enumerate}
	\label{alg:mpc}
\end{algorithm}

\begin{remark}
	Given $u^{\star} = \argmin_{u\in\U_T^{\mathbb{X}_f}(x_0)} J_T(x_0,u)$, we denote $\hat{x}: = x(\delta; x_0, u^{\star})$ and define $\hat{u} : [0, T) \to U$ via $\hat{u}(t) = u^{\star}(\delta + t)$ for $t < T - \delta$ and $u(t) = u_\mathrm{nom}$ otherwise. %
	Then, $g(x(t; \hat x, \hat u)) \leq 0$ for all $t \geq 0$ and $x(T - \delta; \hat x, \hat u) \in \mathbb{X}_f$, i.e., $\hat{u} \in \mathcal{U}_T^{\mathbb{X}_f}(\hat{x})$.\ 
	Thus, initial feasibility of $\OCPT$ yields recursive feasibility. %
\end{remark}

The following result is essential to prove that the MPC implementation works.\ %
\begin{lemma}\label{lemma_V_inf_bounded}
	Without countermeasures the infinite horizon cost functional is uniformly bounded on the MRPI, i.e.,\ %
	\begin{align}
		\exists \, C \in \mathbb{R}_{> 0} \; \forall \, x_0 \in \mathcal{M} : \; J_{\infty}(x_0, u_\mathrm{nom}) < C. \notag
	\end{align}
	Consequently, $V_{\infty}(x_0) < C$ for all $x_0 \in \mathcal{M}$.\ %
\end{lemma}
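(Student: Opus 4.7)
The plan is to use the nominal input to collapse $J_\infty$ to an integral of $E^2 + I^2$ alone and then exploit the conservation structure of the SEIR equations to bound that integral in closed form. Under $u \equiv u_\mathrm{nom}$ the control-penalty terms in~\eqref{quadtratic_stage_cost} vanish, leaving $J_\infty(x_0, u_\mathrm{nom}) = \int_0^\infty E^2(t) + I^2(t)\,\dee t$. Since $\mathcal{M} \subseteq \Pi \subset [0,1]^3$ is positively invariant, $E(t), I(t) \in [0,1]$ along the nominal trajectory, so $E^2 + I^2 \leq E + I$, and it suffices to bound $\int_0^\infty (E(t) + I(t))\,\dee t$ uniformly over $\mathcal{M}$.

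The key trick is to sum the SEIR equations in two different combinations: \eqref{SEIR_eq_1}--\eqref{SEIR_eq_3} give $\dot S + \dot E + \dot I = -\gamma_\mathrm{nom} I$, while \eqref{SEIR_eq_1}+\eqref{SEIR_eq_2} give $\dot S + \dot E = -\eta E$. Integrating both identities over $[0,T]$ and letting $T \to \infty$---using Lemma~\ref{lemma_on_sets}(3), which for every $x_0 \in \mathcal{M}$ ensures that $E(t), I(t) \to 0$ and the monotone limit $S_\infty := \lim_{t \to \infty} S(t)$ exists---yields
\[
\gamma_\mathrm{nom} \int_0^\infty I(t)\,\dee t \; = \; S_0 + E_0 + I_0 - S_\infty, \qquad \eta \int_0^\infty E(t)\,\dee t \; = \; S_0 + E_0 - S_\infty.
\]
Both right-hand sides are bounded by $S_0 + E_0 + I_0 \leq 1$ since $x_0 \in \Pi$, so combining the two estimates gives
\[
J_\infty(x_0, u_\mathrm{nom}) \; \leq \; \tfrac{1}{\eta} + \tfrac{1}{\gamma_\mathrm{nom}},
\]
and any constant $C > \tfrac{1}{\eta} + \tfrac{1}{\gamma_\mathrm{nom}}$ works, independently of $x_0$.

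The ``consequently'' part is immediate: for $x_0 \in \mathcal{M}$, the MRPI property ensures $x(t; x_0, u_\mathrm{nom}) \in G_\Pi$ for all $t \geq 0$, and Lemma~\ref{lemma_on_sets}(3) gives convergence to $\mathcal{E} \subset \mathbb{X}_f$, so $u_\mathrm{nom} \in \mathcal{U}_\infty^{\mathbb{X}_f}(x_0)$ and therefore $V_\infty(x_0) \leq J_\infty(x_0, u_\mathrm{nom}) < C$.

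I do not foresee a serious obstacle. The only point that deserves a line of justification is the passage $T \to \infty$ in the integrated identities; this is routine since the antiderivatives converge (by existence of $S_\infty$ from Lemma~\ref{lemma_on_sets}) and the integrands are nonnegative, so monotone convergence applies. The one small \emph{observation} that keeps the argument purely algebraic, avoiding an explicit exponential-decay estimate as in Lemma~\ref{lem:exponential_decay_if_S_suff_small}, is the elementary bound $E^2 + I^2 \leq E + I$ on $[0,1]^2$.
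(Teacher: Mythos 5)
Your proposal is correct and follows essentially the same route as the paper's proof: both reduce $J_\infty(x_0,u_\mathrm{nom})$ to $\int_0^\infty E^2+I^2\,\dee t$, use the elementary bound $E^2+I^2\le E+I$ on $[0,1]^2$, and obtain closed-form values for $\int_0^\infty E\,\dee t$ and $\int_0^\infty I\,\dee t$ by integrating linear combinations of the SEIR equations and passing to the limit via Lemma~\ref{lemma_on_sets}. The only differences are cosmetic (you sum the equations directly where the paper integrates $\dot S$ and $\dot R$ separately, yielding the marginally sharper constant $\tfrac1\eta+\tfrac1{\gamma_\mathrm{nom}}$ instead of $\tfrac2\eta+\tfrac1{\gamma_\mathrm{nom}}$, and you justify the ``consequently'' step slightly more explicitly).
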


\begin{proof}
	Let $x_0 \in \mathcal{M}$ be given.\ %
	Then, any control $u \in \mathcal{U}$ yields $x(t; x_0, u) \in G_\Pi$ for all $t \geq 0$. Moreover, $I_\infty := \lim_{t \to \infty} I(t) = \lim_{t \to \infty} E(t) = 0$ and $S_\infty := \lim_{t \to \infty} S(t)$ and $R_\infty := \lim_{t \to \infty} R(t)$ exist.\ %
	Hence, 
	\begin{align}
		\int_0^\infty{\beta(t) S(t) I(t) \, \mathrm{d} t} = - \int_0^\infty{\dot S(t) \, \mathrm{d} t} = S_0 - S_\infty \in [0,1] \notag
	\end{align}
	and, therefore,
	\begin{align}
		\int_0^\infty{E(t) \, \mathrm{d} t} & = \tfrac 1 \eta \left( \int_0^\infty{\beta(t) S(t) I(t) \, \mathrm{d} t} - \int_0^\infty{\dot E(t) \, \mathrm{d} t} \right) \notag \\
		& = \tfrac 1 \eta ( S_0 - S_\infty + E_0). \notag
	\end{align}
	Moreover, since $E(t) \in [0,1]$, we get $E(t)^2 \leq E(t)$ for all $t \geq 0$ and, thus,
	\begin{align}
		\int_0^\infty{E(t)^2 \, \mathrm{d} t} \leq \int_0^\infty{E(t) \, \mathrm{d} t} = \tfrac 1 \eta ( S_0 - S_\infty + E_0) < \infty. \notag
	\end{align}
	Analogously, one can show that
	\begin{align}
		\int_0^\infty{I(t)^2 \, \mathrm{d} t} \leq \int_0^\infty{I(t) \, \mathrm{d} t} = \tfrac{1}{\gamma_\mathrm{nom}} (1 - R_0 - S_\infty) < \infty. \notag
	\end{align}
	In conclusion, we get
	\begin{align}
		V_\infty(x_0) \; \leq & \; J_\infty(x_0, u_\mathrm{nom}) \notag \\
		= & \; \int_0^\infty{E(t; x_0, u_\mathrm{nom})^2 + I(t; x_0, u_\mathrm{nom})^2 \, \mathrm{d} t} \notag \\
		\leq & \; \tfrac 1 \eta (S_0 - S_\infty + E_0) + \tfrac{1}{\gamma_\mathrm{nom}} (1 - R_0 - S_\infty) \notag \\
		\leq & \; \tfrac 2 \eta + \tfrac{1}{\gamma_\mathrm{nom}} < \infty, \notag
	\end{align}
	which completes the proof.\ %
\end{proof}

\subsection{Initial feasibility}
Next we show initial (and, thus, recursive) feasibility if the following mild assumption holds.\ %
\begin{assumption}\label{ass:all}
	If there are infected people, i.e., $E_0 + I_0 > 0$, then we assume the initial values to be bounded from below, i.e., there exists some $\varepsilon_0 > 0$ such that
	\begin{align}
		\varepsilon_0 \; \leq \; \min \{E_0, I_0\}. \label{ass:E0_I0>eps}
	\end{align}
	We further assume there exists some $K \in \mathbb{R}_{> 0}$ such that
	\begin{align}
		I(t) / (I(t) + E(t)) \; \geq \; K \quad \forall \, t \geq 0, \label{ass:ratio_I_to_E+I}
	\end{align}
	i.e., the fraction of infected people who are infectious is bounded from below.\ %
	In order to facilitate the proof of Lemma~\ref{lemma_reach_A_finite_time}, we further assume that 
	\begin{align}
		\beta_\mathrm{min} \; \leq \; \gamma_\mathrm{max}. \label{ass:beta<=gamma}
	\end{align}
	Note that this can always be achieved by allowing stricter social distancing or quarantine measures.\ %
\end{assumption}
Our numerical simulations in Section~\ref{sec_numerics} show that Assumption~\eqref{ass:E0_I0>eps} is essential to reach the terminal set in finite time, see Figure~\ref{fig:no_cost_controllability}.\ %
However, from a practical point of view, we are only interested in a situation where there already are infected people.\ %

Based on Assumption~\ref{ass:all}, we are able to show that the terminal set is reached in finite time.\ %
\begin{lemma}\label{lemma_reach_A_finite_time}
	Let assumption~\eqref{ass:ratio_I_to_E+I} hold.\ %
	Then, the time required to reach the terminal set is uniformly bounded on the subset $\mathcal{A}^\prime := \Set{x_0 \in \mathcal{A} | \text{\eqref{ass:E0_I0>eps} holds}}$, i.e.
	\begin{align}
		\exists \, T \in \mathbb{R}_{> 0} \, \forall \, x_0 \in \mathcal{A}^\prime \, \exists \, u \in \mathcal{U} : x(T; x_0, u) \in \mathbb{X}_f. \notag
	\end{align}
	Hence, %
	\[
	\exists \, C \in \mathbb{R}_{> 0} \, \forall \, x_0 \in \mathcal{A}^\prime : \; V_{\infty}(x_0) < C.
	\]
\end{lemma}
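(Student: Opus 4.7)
My plan is to show that, for every $x_0 \in \mathcal{A}'$, there is a feasible control $u$ driving the trajectory into $\mathbb{X}_f$ within a time $T$ independent of $x_0$. The bound $V_\infty(x_0) < C$ then follows by splitting $J_\infty(x_0,u) = \int_0^T \ell\,\dee t + J_\infty(x(T),\hat u)$: the first integral is bounded by $T\,\sup\ell$ over the compact feasible set, and the tail by Lemma~\ref{lemma_V_inf_bounded} since $x(T) \in \mathbb{X}_f \subseteq \mathcal{M}$.

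I would split on the position of $S_0$ relative to the herd-immunity threshold $\bar S = \gamma_{\nom}/\beta_{\nom}$. If $S_0 \le \bar S$, then applying $u \equiv u_{\nom}$ (switched to stronger countermeasures near the cap if needed) keeps $S \le \bar S$ and, by the linear-comparison argument of Lemma~\ref{lem:exponential_decay_if_S_suff_small}, drives $(E,I)$ to the target box at a rate uniform in $x_0$, since the $2\times 2$ comparison matrix has eigenvalues depending only on $\eta,\gamma_{\nom},\bar S$; combined with $E_0+I_0 \le 1$ this gives a uniform time $T_1$. If $S_0 > \bar S$, the task is to force $S$ across $\bar S$; using $S(t) = S_0 \exp\!\bigl(-\int_0^t \beta(s)I(s)\,\dee s\bigr)$ and $\beta \ge \beta_{\min}$, this reduces to a uniform positive lower bound on $\int_0^{T_2} I(s)\,\dee s$. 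Assumption~\eqref{ass:E0_I0>eps} supplies $I_0 \ge \varepsilon_0$; \eqref{ass:ratio_I_to_E+I} slaves $I$ to $E+I$, whose contraction rate is universally bounded by $\eta+\gamma_{\max}$ through $\dot{(E+I)} \ge -(\eta+\gamma_{\max})(E+I)$; and \eqref{ass:beta<=gamma} rules out a regime in which $E+I$ decays strictly faster. Together these give $I(t) \ge 2K\varepsilon_0 e^{-(\eta+\gamma_{\max})t}$ along any admissible trajectory, and concatenating the two cases yields the uniform bound $T = T_1 + T_2$.

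The main obstacle is the integral lower bound in Case~2: the exponential lower bound on $I$ just sketched produces a convergent integral whose total value may be smaller than $\log(1/\bar S)/\beta_{\min}$ when $\bar S$ is small, in which case strong countermeasures alone do not guarantee that $S$ ever crosses $\bar S$. The argument must instead exploit $x_0 \in \mathcal{A}$ more carefully: since by definition of $\mathcal{A}$ a feasible control exists, and since Remark~\ref{rem:threshold_S} shows that $E+I$ is strictly increasing under $u_{\nom}$ whenever $S > \bar S$, I expect the proof to use a switching control that applies $u_{\nom}$ while $I$ is safely below $I_{\max}$ (driving $N = E+I$, and hence $I \ge KN$, upward toward the cap) and switches to stronger countermeasures only when the cap is imminent. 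In that regime $I$ stays near $I_{\max}$, producing an integrand uniformly bounded below by $KI_{\max}$ over a uniform window, which delivers the required threshold-crossing time and closes the proof.
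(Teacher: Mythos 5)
Your final plan---drive $I$ up toward the cap, then hold it there so that $\dot S = -\beta S I$ stays bounded away from zero until $S$ crosses $\bar S$, with the ramp-up time controlled via $\varepsilon_0$ and $K$---is exactly the paper's construction: it concatenates a control $u_1$ reaching $I(t_1) = I_{\max}/2$, a control $u_2$ holding $E+I$ constant, and a control $u_3$ holding $E$ and $I$ individually constant (so that $I \ge K\, I_{\max}/2$) until $S(t_3) = \bar S$, and it bounds the first phase uniformly via $\dot S(t) \le -2\beta_{\min}K\varepsilon_0\, S(t)$ while $S(t) > \bar S$. You also correctly diagnosed that the naive bound $I(t) \ge 2K\varepsilon_0 e^{-(\eta+\gamma_{\max})t}$ yields a convergent integral and cannot force the threshold crossing, and your proposed repair is precisely what the paper does. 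Two loose ends remain in your sketch. First, in your sub-threshold case the comparison matrix of Lemma~\ref{lem:exponential_decay_if_S_suff_small} has a zero eigenvalue at $S = \bar S$, so the decay rate is \emph{not} uniform over $S_0 \le \bar S$ and your claimed uniform $T_1$ does not follow as stated; it can be repaired by first noting that $I \ge K(E+I)$ is bounded below as long as $E$ exceeds the target box $\gamma_{\nom}I_{\max}/\eta$, which forces $S$ uniformly below $\bar S$ in uniform time, after which the comparison argument gives a genuine uniform rate. Second, your reading of assumption~\eqref{ass:beta<=gamma} is off: it is not about ruling out fast decay of $E+I$, but is used by the paper to guarantee feasibility of the holding controls, namely that the ratio $\gamma(t)/\beta(t)$ can be steered to equal $S(t)$ because $\gamma_{\nom}/\beta_{\nom} \le S(t) \le 1 \le \gamma_{\max}/\beta_{\min}$ during the holding phases.
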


\begin{proof}
	For $x_0 \in \mathcal{M}$, the assertion follows from Lemma~\ref{lemma_V_inf_bounded}.\ %
	Consider an arbitrary $x_0 \in \mathcal{A}^\prime \setminus \mathcal{M}$.\ %
	Then, $x_0 \notin \mathbb{X}_f$ according to Lemma~\ref{lem:exponential_decay_if_S_suff_small}.\ %
	We construct a control $\tilde u \in \mathcal{U}$ such that $x(t; x_0, \tilde u) \in G_\Pi$ for all $t \geq 0$ and $x(T; x_0, \tilde u) \in \mathbb{X}_f$ for some finite $T > 0$.\ %
	To this end, we ensure a decay of~$S$ as follows.\ %
	
	Note that since $x_0 \in \mathcal{A}^\prime \setminus \mathcal{M}$ and $\lim_{t \to \infty} I(t) = 0$ there exists some control $u_1 \in \mathcal{U}$ satisfying $x(t; x_0, u_1) \in G_\Pi$ for all $t \geq 0$ and some time $t_1 \geq 0$ such that $I(t_1; x_0, u_1) = 1/2 \cdot I_\mathrm{max}$.\ %
	Moreover, based on assumption~\eqref{ass:beta<=gamma} we may assume that $\gamma_\mathrm{nom} / \beta_\mathrm{nom} < 1 < \gamma_\mathrm{max} / \beta_\mathrm{min}$.\ %
	since otherwise, the pandemic would simply abate without control, see Remark~\ref{rem:threshold_S}.\ %
	Therefore, there exists some time $t_3 \in (t_1, \infty]$ and a control $u_2 = (\beta_2, \gamma_2)^\top \in \mathcal{U}$ such that
	\begin{align}
		\frac{\gamma_2(t)}{\beta_2(t)} = S(t) \in \left[ \frac{\gamma_\mathrm{nom}}{\beta_\mathrm{nom}},~\frac{\gamma_\mathrm{max}}{\beta_\mathrm{min}}\right] \quad \forall \, t \in [t_1, t_3] \notag
	\end{align}
	and, hence, the number of infected $E + I$ is constant on the interval $[t_1, t_3]$.\ %
	Since $\lim_{t \to \infty} (E(t)+I(t)) = 0$, this implies $t_3 < \infty$.\ %
	
	Next, we show that there exists some $t_2 \in [t_1, t_3]$ and a control $u_3 = (\beta_3, \gamma_3)^\top \in \mathcal{U}$ such that	 $E + I$ is constant on $t \in [t_2, t_3]$ and, in addition, 
	\begin{equation}
		-\dot E(t) = \dot I(t) = \eta E(t) - \gamma_3(t) I(t) = 0 \quad \forall \, t \in [t_2, t_3]. \notag
	\end{equation}
	Suppose $\eta E(t) / I(t) < \gamma_\mathrm{nom}$ for all $t > t_1$.\ %
	Then, $\dot E(t) > 0$ for all $t > t_1$, in contradiction to $\lim_{t \to \infty} E(t) = 0$.\ %
	On the other hand, if $\eta E(t) / I(t) > \gamma_\mathrm{max}$ for all $t > t_1$, then $\dot I(t) > 0$ for all $t > t_1$, in contradiction to $\lim_{t \to \infty} I(t) = 0$.\ %
	Therefore, $t_2$ and $u_3$ as above exist with 
	\begin{align}
		\gamma_3(t) = \eta E(t) / I(t) \quad \text{and} \quad \beta_3(t) = \eta E(t_2) / (S(t) I(t_2)) \notag
	\end{align}
	for all $t \in [t_2, t_3]$.\ %
	(The feasibility of~$\beta_3$ can be argued analogously to the one of~$\gamma_3$.)\ %
	
	Thus, we have
	\begin{align}
		\frac{\gamma_3(t)}{\beta_3(t)} = S(t) &= S(t_2) - I(t_2) \int_{t_2}^{t} \gamma_3(s) \mathrm{d}s \notag \\
		& \leq S(t_2) - I(t_2) \gamma_\mathrm{nom} (t - t_2) \notag
	\end{align}
	for all $t \in [t_2, t_3]$.\ %
	With the maximal~$t_3$ we arrive at $S(t_3) = \bar S$ 
	and, thus, $x(T, x_0, \tilde{u}) \in \mathbb{X}_f$ with $T = t_3$ and
	\begin{align}
		\tilde u(t) = 
		\begin{cases}
			u_1(t) \; & \text{if } t < t_1, \\
			u_2(t) \; & \text{if } t \in [t_1, t_2), \\
			u_3(t) \; & \text{if } t \in [t_2, t_3).
		\end{cases} \notag
	\end{align}
	Due to assumptions~\eqref{ass:E0_I0>eps} and~\eqref{ass:ratio_I_to_E+I}, the value 
	\begin{align}
		\sup_{x_0 \in \mathcal{A}^\prime} \inf \Set{t_1 \geq 0 | 
			\begin{matrix}
				\exists u \in \mathcal{U} : x(t; x_0, u) \in G_\Pi \, \forall \, t \geq 0 \\
				I(t_1; x_0, u) = 1/2 \cdot I_\mathrm{max}
		\end{matrix}} \notag
	\end{align}
	is bounded uniformly since 
	\begin{align}
		\dot S(t) \leq - \beta_\mathrm{min} K (E_0 + I_0) S(t) \leq - 2 \beta_\mathrm{min} K \varepsilon_0 S(t) \notag
	\end{align}
	for all $t \geq 0$ with $S(t) > \gamma_\mathrm{nom} / \beta_\mathrm{nom}$.\ %
\end{proof}

As argued in the proof of Lemma~\ref{lemma_reach_A_finite_time}, assumption~\eqref{ass:ratio_I_to_E+I} is not too restrictive.\ %
Whenever~$I(t)$ is small while~$E(t)$ is big at the same time, then $\dot I(t) > 0$, meaning that~$I$ is going to increase.\ %
Thus, after some time into the pandemic, there will always be a certain fraction of infected people who are infectious.\ %

Our main result states that the MPC feedback generated by Algorithm~\ref{alg:mpc} approximates the solution of~\eqref{OCP_infinite_horizon}.\ %
\begin{proposition}\label{prop_mpc_works}
	Let assumption~\eqref{ass:ratio_I_to_E+I} hold.\ %
	There exists a finite prediction horizon $T > 0$ such that $\OCPT$~\eqref{OCP_infinite_horizon} is initially and, thus, recursively feasible for every $x_0 \in \mathcal{A}^\prime$ with $\lim_{t \to \infty}x_{\mu_T,\delta}(t,x_0) \in \mathcal{E}$, and $g(x_{\mu_T, \delta}(t,x_0)) \leq 0$ for all $t \geq 0$ under the MPC feedback~$\mu_{T,\delta}$ produced by Algorithm~\ref{alg:mpc}.\ 
\end{proposition}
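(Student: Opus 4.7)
The plan is to proceed in three stages: initial feasibility of $\OCPT$ on $\mathcal{A}'$, recursive feasibility of the closed loop, and convergence of the closed-loop state to $\mathcal{E}$ while respecting the infection cap.

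For \emph{initial feasibility}, I set the prediction horizon equal to the uniform reach time $T^\star$ supplied by Lemma~\ref{lemma_reach_A_finite_time} (rounded up to a multiple of $\delta$ if needed). For every $x_0 \in \mathcal{A}'$, the lemma furnishes a control $\tilde u \in \mathcal{U}$ with $x(t; x_0, \tilde u) \in G_\Pi$ on $[0, T^\star]$ and $x(T^\star; x_0, \tilde u) \in \mathbb{X}_f$; since $\mathbb{X}_f \subseteq \mathcal{M}$ by Lemma~\ref{lem:exponential_decay_if_S_suff_small}, prolonging $\tilde u$ beyond $T^\star$ by $u_\mathrm{nom}$ keeps the trajectory in $G_\Pi$, so $\tilde u \in \mathcal{U}_T^{\mathbb{X}_f}(x_0) \neq \emptyset$.

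For \emph{recursive feasibility}, I use the shifted-plus-nominal prolongation from the Remark following Algorithm~\ref{alg:mpc}. The only non-routine point is that the appended nominal tail stays in $\mathbb{X}_f$, i.e.\ that $\mathbb{X}_f$ is positively invariant under $u_\mathrm{nom}$: this is confirmed by a boundary inspection, since $\dot S \leq 0$ preserves $S \leq \bar S$; on $\{I = I_\mathrm{max}\}$ one has $\dot I = \eta E - \gamma_\mathrm{nom} I_\mathrm{max} \leq 0$ because $E \leq \gamma_\mathrm{nom} I_\mathrm{max}/\eta$; and on $\{E = \gamma_\mathrm{nom} I_\mathrm{max}/\eta\}$ one has $\dot E = \beta_\mathrm{nom} S I - \eta E \leq \beta_\mathrm{nom} \bar S I_\mathrm{max} - \gamma_\mathrm{nom} I_\mathrm{max} = 0$. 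Recursive feasibility together with $\mathbb{X}_f \subseteq G_\Pi$ immediately yields $g(x_{\mu_{T,\delta}}(t; x_0)) \leq 0$ for all $t \geq 0$.

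For \emph{convergence}, the key ingredient is the dynamic-programming identity obtained by differentiating $J_f(x_\mathrm{nom}(t; \hat x)) = \int_t^\infty (E_\mathrm{nom}(s; \hat x)^2 + I_\mathrm{nom}(s; \hat x)^2)\,\mathrm{d} s$, which gives $\tfrac{\mathrm{d}}{\mathrm{d} t} J_f(x_\mathrm{nom}(t; \hat x)) + \ell(x_\mathrm{nom}(t; \hat x), u_\mathrm{nom}) = 0$ on $\mathbb{X}_f$; finiteness of $J_f$ there is provided by Lemma~\ref{lemma_V_inf_bounded}. Combined with the shifted candidate, this yields the Lyapunov-type inequality
\[
V_T\bigl(x_{\mu_{T,\delta}}(\delta; x_0)\bigr) \; \leq \; V_T(x_0) - \int_0^\delta \ell\bigl(x_{\mu_{T,\delta}}(t; x_0), \mu_{T,\delta}(t, x_0)\bigr)\,\mathrm{d} t,
\]
cf.\ \cite{grune2017nonlinear}. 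Telescoping over sampling instants and using $V_T \geq 0$ gives $\int_0^\infty (E^2 + I^2)(t)\,\mathrm{d} t < \infty$ along the closed loop. The most delicate step is then passing from summability of the stage cost to pointwise convergence: since the state stays in the compact $\Pi$ and the control in the compact $U$, $E$ and $I$ are Lipschitz, hence uniformly continuous, and Barbalat's lemma delivers $E(t), I(t) \to 0$; monotonicity of $S$ then ensures $\lim_{t \to \infty} x_{\mu_{T,\delta}}(t; x_0) \in \mathcal{E}$.
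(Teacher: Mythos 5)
Your proof is correct, and the feasibility half coincides with the paper's argument: both obtain initial feasibility from the uniform reach time supplied by Lemma~\ref{lemma_reach_A_finite_time} and recursive feasibility from the shifted-plus-nominal candidate. You go further than the paper in one useful respect: you actually verify that $\mathbb{X}_f$ is positively invariant under $u_\mathrm{nom}$ via the boundary inspection, which is genuinely needed for the appended nominal tail to satisfy the terminal constraint (the containment $\mathbb{X}_f \subseteq \mathcal{M}$ alone only guarantees that the trajectory does not leave $G_\Pi$); the paper asserts this in the remark after Algorithm~\ref{alg:mpc} without proof. Where you diverge is the convergence argument. The paper disposes of it in one line by invoking Lemma~\ref{lemma_on_sets}: for the SEIR model, every trajectory that stays in $\Pi$ satisfies $E(t), I(t) \to 0$ under \emph{any} admissible input, so once the closed loop is feasible for all time, convergence to $\mathcal{E}$ is automatic and no terminal-cost machinery is needed (as the paper's own subsequent remark points out). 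You instead run the standard MPC stability argument: the dynamic-programming identity for $J_f$ on $\mathbb{X}_f$, the relaxed Lyapunov inequality for $V_T$, telescoping, and Barbalat's lemma. This is heavier than necessary here, but it is sound (finiteness of $J_f$ on $\mathbb{X}_f$ follows from Lemma~\ref{lemma_V_inf_bounded}, as you note), it would survive in settings where the open-loop dynamics do not already eradicate the disease, and as a by-product it yields the performance bound $\int_0^\infty \ell(x_{\mu_{T,\delta}}(t;x_0), \mu_{T,\delta}(t,x_0))\,\mathrm{d}t \leq V_T(x_0)$ along the closed loop, which the paper's argument does not provide.
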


\begin{proof}
	From the proof of Lemma~\ref{lemma_reach_A_finite_time}, the target set $\mathbb{X}_f$ is reachable from any $x_0 \in \A^\prime$ in finite time.\ %
	Thus, there exists a finite $T > 0$ such that for every $x_0 \in \A^\prime$ the problem $\OCPT$ is initially feasible.\ %
	Hence, $\OCPT$ is recursively feasible, implying that the infection cap is respected for all $t \geq 0$.\ %
	After the state reaches $\mathbb{X}_f \subset \M$, it approaches $\mathcal{E}$ asymptotically according to Lemma~\ref{lemma_on_sets}.\ %
\end{proof}
\begin{remark}
	For the SEIR model, the final costs~$J_f$ are in fact not needed to ensure stability, as is usual in MPC.\ %
	However, it does improve the closed-loop transient performance.
\end{remark}

\section{Numerical case study}\label{sec_numerics}
We now illustrate the paper's theoretical results with simulations in \texttt{Matlab}, using the script provided by \cite{grune2017nonlinear} to implement the MPC controller.\ %
We consider the constrained SEIR model~\eqref{SEIR_IVP} under constraints \eqref{SEIR_eq_4} and \eqref{SEIR_eq_5}.\ %
We take $\beta_{\nom} = 0.44$, $\gamma_{\nom} = 1/6.5$, $\eta = 1/4.6$ and $I_{\max} = 0.05$~\cite{GrunHeyd21b}.\  %
Furthermore, we take $\gamma_{\max} = 0.5$ and $\beta_{\min} = 0.22$.\ %
The point clouds of the boundaries of~$\M$ and~$\A$, indicated in Figure~\ref{fig_1}, are obtained as detailed in the paper \cite{ester_epidemic_management_2021}, that is, via backward numerical integration of the system with a special "extremal input'' from particular initial points on the state constraint $I = I_{\max}$.\ %
\begin{figure}[h]
	\begin{center}\
		\includegraphics[width=0.8\columnwidth]{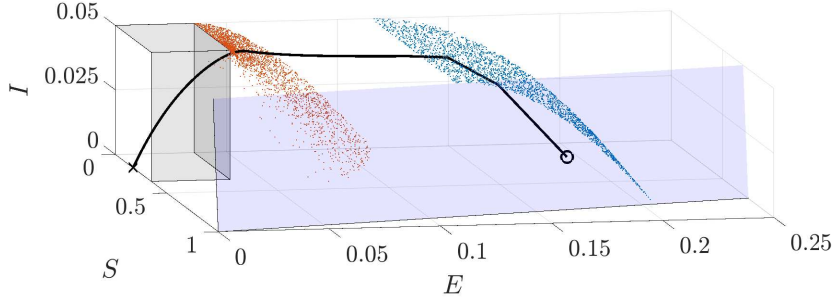} 
		\caption{The point clouds represent the boundaries of~$\mathcal{A}$ (blue) and~$\mathcal{M}$ (red) while the planes depict the boundaries of~$\mathbb{X}_f$ (grey) and~$\Pi$ (blue).\ %
			The trajectory obtained by the MPC feedback law starts in~$\mathcal{A}$ (black circle), approaches its boundary, reaches~$\mathcal{M}$, and converges to a disease-free equilibrium (black cross).}
		\label{fig_1}                             
	\end{center}                                
\end{figure}%
Also shown is a typical run of the MPC closed loop as produced by Algorithm~1 with $\delta = 1$.\ %
The prediction horizon is set to $T = 25$ days and the initial value $x_0 = (0.50, 0.18, 0.01)^\top$ is close to the boundary of~$\mathcal{A}$.\ %
The impact of imperfect predictions and state estimation is left for future research.\ %

In Figure~\ref{fig:I_over_time} the evolution of~$I$ subject to the MPC feedback~$\mu_T$ is depicted.\ %
\begin{figure}[h]
	\centering
	\includegraphics[width=0.8\columnwidth]{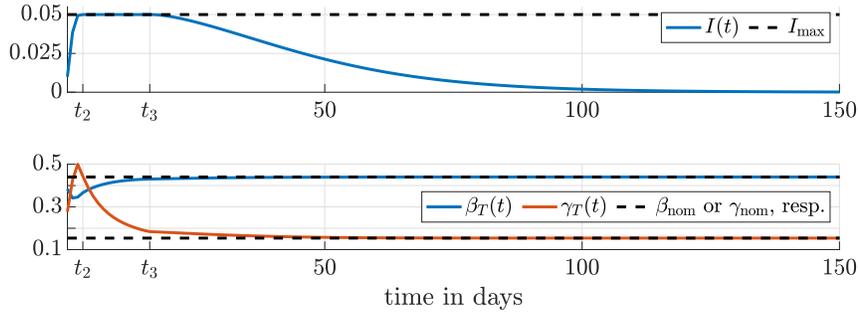} 
	\caption{Evolution of the infectious compartment~$I$ subject to the MPC feedback $\mu_T = (\beta_T, \gamma_T)^\top$ over time.\ %
		First, $I$ increases until $I(t_2) = I_\mathrm{max}$, then it stays there until $S(t_3) = \bar S$, and decays afterwards.}
	\label{fig:I_over_time}                                                       
\end{figure}
The trajectory follows the construction in the proof of Lemma~\ref{lemma_reach_A_finite_time}.\ %
First, $I$ increases until it reaches some controlled equilibrium %
$\dot I(t_2) = 0$ until herd immunity is achieved with $S(t_3) = \bar S$.\ %
Then, it decays asymptotically towards zero.\ %
Note that the optimal equilibrium in this context is the infection cap, i.e., $I \equiv I_\mathrm{max}$ since it maximises the descent of~$S$.\ %
Furthermore, we weight the cost terms in~\eqref{quadtratic_stage_cost} equally since tuning weights is out of scope of this paper.\ %
As a result, the cost term with respect to~$\gamma_T$ is much bigger than the one associated with~$\beta_T$.\ %
Keep in mind that in our model, only infectious people are put into quarantine and we do not consider re-infections, i.e., they are completely removed from the system dynamics.\ %

Figure~\ref{fig:no_cost_controllability} motivates assumption~\eqref{ass:E0_I0>eps}.\ %
\begin{figure}[h]
	\centering
	\includegraphics[width=0.8\columnwidth]{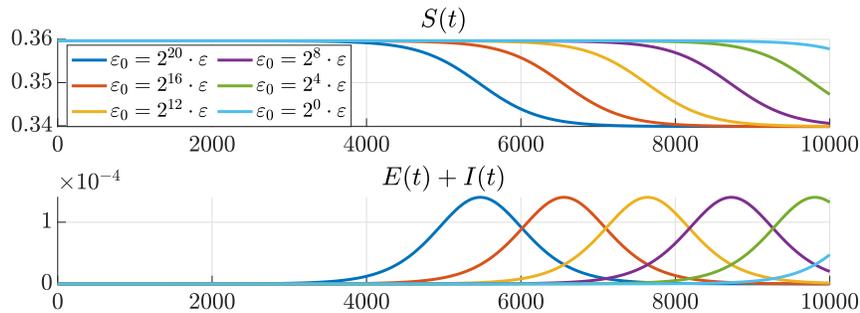}
	\caption{Impact of initial value for small $x_0 = (\bar{S} + 0.01, \varepsilon_0, \varepsilon_0)^\top$ without control, i.e., $u \equiv u_\mathrm{nom}$ and with machine precision $\varepsilon = 2.2204 \cdot 10^{-16}$.}
	\label{fig:no_cost_controllability}
\end{figure}
We observe that starting close to $S_0 = \bar S$ and decreasing~$E_0$ and~$I_0$, the time for a pandemic to break out explodes.\ %
Moreover, the overall cost $J_\infty(x_0, u_\mathrm{nom})$ seems to be independent of the choice of~$E_0$ and~$I_0$.\ %
One reason for that is the fact that $E+I$ increases as long as $S(t) > \bar S$.\ %
Hence, for the pandemic to die out, $S_0 - \bar{S}$ many people have to get infected first.\ %
Consequently, we do not have cost controllability on~$\mathcal{A}$.\footnote{Cost controllability is satisfied at~$\bar{x}$ if there exists some $\rho > 0$ such that $V_\infty(x) \leq \rho \cdot \ell^\star(x)$ with $\ell^\star(x) : = \inf_u \ell(x,u)$ for all $x \in \mathcal{N} \cap G_\Pi$ in some neighbourhood~$\mathcal{N}$ of~$\bar{x}$, see also~\cite{esterhuizen2020recursive,coron2020model}.}\ %

\section{Conclusions}
In this paper, we considered the SEIR compartmental model with control inputs representing social distancing and quarantine measures.\ %
Based on a hard infection cap, we determined a subset in the state space, where the pandemic is contained without enforcing countermeasures.\ %
We used this set to define terminal constraints for our MPC algorithm and showed initial and, thus, recursive feasibility under mild assumptions.\ %
Our numerical simulations show that the approach is suitable to maintain the infection cap while keeping the total amount of time short where countermeasures have to be enforced.\ %
We found that in order for the pandemic to abate, sufficiently many people have to be infected first, i.e., herd immunity needs to be established.\ %
Our approach exploits that~$S$ is strictly decreasing.\ %
Taking re-infectious or births into account requires further investigation.\ %

\bibliographystyle{plain}
\bibliography{references}

\end{document}